\documentclass[12pt,letterpaper,reqno]{article}
\parindent=0pt

\usepackage{graphicx}
\usepackage{amsmath}
\usepackage{amsfonts}
\usepackage{amsthm}
\usepackage{amssymb}
\usepackage[T1]{fontenc}
\usepackage[toc,page]{appendix}
\usepackage{hyperref}
\usepackage{comment}

\theoremstyle{plain}
\newtheorem{theorem}{Theorem}[section]

\newtheorem{assumption}[theorem]{Assumption}

\newtheorem{proposition}[theorem]{Proposition}

\usepackage{ulem,soul,xcolor}

\newcommand{\mr}{\mathbb{R}}
\newcommand{\R}{\mathbb{R}}
\newcommand{\mc}{\mathcal}
\newcommand{\cA}{\mathcal{A}}

\newcommand{\cF}{\mathcal{F}}

\newcommand{\bP}{\mathbb{P}}
\newcommand{\bE}{\mathbb{E}}

\newcommand{\tr}{\mathrm{tr}}

\numberwithin{equation}{section}

\title{Non-explosion by Stratonovich noise for ODEs}
\author{
	Mario Maurelli\footnote{Dipartimento di Matematica `Federigo Enriques', Universit\`a degli Studi di Milano, via Saldini 50, 20133 Milano, Italy. E-mail address: {\tt mario.maurelli@unimi.it}}}
\date{}

\begin{document}

\maketitle

\begin{abstract}
We show that the addition of a suitable Stratonovich noise prevents the explosion for ODEs with drifts of super-linear growth, in dimension $d\ge 2$. We also show the existence of an invariant measure and the geometric ergodicity for the corresponding SDE.
\end{abstract}

\section{Introduction and main result}

An ODE on $\R^d$, with $d\ge 2$,
\begin{align}
dX = b(X)dt,\quad X_0=x_0,\label{eq:ODE}
\end{align}
with locally Lipschitz drift $b:\R^d\to\R^d$, can exhibit explosion in finite time: this is the case, for example, when
\begin{align*}
b(x) = |x|^{m-1}x,
\end{align*}
with $m>1$, as it can be checked computing the explicit solution. The main result of this paper is that the addition of a suitable Stratonovich noise can prevent the explosion. Precisely, we take the SDE on $\R^d$, with $d\ge 2$,
\begin{align}
dX = b(X)dt +\sigma(X) \circ dW,\quad X_0=x_0. \label{eq:SDE}
\end{align}
Here $x_0$ is given in $\R^d$, $W$ is a $d$-dimensional Brownian motion on a filtered probability space $(\Omega,\cA,(\cF_t)_t,\bP)$ (satisfying the standard assumption), $\circ$ denotes the Stratonovich integration and $b$ and $\sigma$ satisfy the following:
\begin{assumption}\label{hp}
We take $m>1$ and $\eta>(m-1)/2$. The drift $b:\R^d\to\R^d$ is locally Lipschitz and verifies, for some $C\ge0$,
\begin{align*}
|b(x)|\le C(1+|x|^m),\quad \forall x\in\R^d.
\end{align*}
The diffusion coefficient $\sigma:\R^d\to\R^{d\times d}$ is $C^1$ with locally Lipschitz derivative and satisfies, for some $R>0$,
\begin{align*}
\sigma(x) = |x|^{\eta+1} \left( I_d -(1+\frac{1}{\eta})\frac{xx^T}{|x|^2} \right), \quad \forall x\in B_R^c.
\end{align*}
\end{assumption}
We have used the notation $I_d$ for the $d\times d$-dimensional matrix, $B_R$ for the open ball of centre $0$ and radius $R$. Under Assumption \ref{hp}, the SDE admits a unique local strong solution.

Here is our main result:
\begin{theorem}\label{thm:main}
Under Assumption \ref{hp}, the SDE \eqref{eq:SDE} in $\R^d$, with $d\ge 2$, admits a global-in-time (unique, strong) solution.
\end{theorem}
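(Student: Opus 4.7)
The plan is to exploit the algebraic form of $\sigma$ outside $B_R$ via a change of coordinates that renders the noise additive on a punctured ball, and then to rule out the origin being hit by the transformed process. Set $y=x/|x|^{\eta+1}$ on $B_R^c$; this is a diffeomorphism onto $B_{R^{-\eta}}\setminus\{0\}$ with $|x|=|y|^{-1/\eta}$. A direct calculation yields $\nabla y(x)=|x|^{-\eta-1}(I_d-(\eta+1)\hat{x}\hat{x}^T)$ and, crucially,
\begin{align*}
\nabla y(x)\cdot\sigma(x)=I_d\qquad \text{for }|x|>R,
\end{align*}
so by the Stratonovich chain rule, whenever $|x_t|>R$,
\begin{align*}
dy_t=dW_t+\tilde{b}(y_t)\,dt,\qquad \tilde{b}(y):=\nabla y(x(y))\cdot b(x(y)),
\end{align*}
and the explosion of $x$ becomes the hitting of $0$ by $y$ in finite time. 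Combining $|b(x)|\le C(1+|x|^m)$, $|\nabla y(x)|\le C|x|^{-\eta-1}$ and $|x(y)|=|y|^{-1/\eta}$ gives $|\tilde{b}(y)|\le C(|y|^{1+1/\eta}+|y|^{-\alpha})$ with $\alpha:=(m-\eta-1)/\eta$, and the hypothesis $\eta>(m-1)/2$ is exactly what yields $\alpha<1$: the drift is sub-critical (weaker than $|y|^{-1}$) near $0$.

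Now I would seek a Lyapunov function $V$ on $\{0<|y|<r_\ast\}$ with $V\to\infty$ at $0$ and $\mathcal{L}V:=\nabla V\cdot\tilde{b}+\tfrac12\Delta V\le 0$. The choice $V(y)=u^\beta$, with $u:=\log(1/|y|)$ and a fixed $\beta\in(0,1)$, works uniformly in $d\ge2$: one computes
\begin{align*}
\nabla V=-\beta u^{\beta-1}\frac{y}{|y|^2},\qquad \Delta V=-\beta(1-\beta)\frac{u^{\beta-2}}{|y|^2}-\beta(d-2)\frac{u^{\beta-1}}{|y|^2},
\end{align*}
so $\Delta V<0$ and $|\Delta V|\ge\beta(1-\beta)u^{\beta-2}|y|^{-2}$, while $|\nabla V\cdot\tilde{b}|\le Cu^{\beta-1}|y|^{-1-\alpha}$. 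The ratio of the drift contribution to $\tfrac12|\Delta V|$ is at most $Cu\,|y|^{1-\alpha}\to 0$ as $|y|\to0$ (since $\alpha<1$), so $\mathcal{L}V\le 0$ on $\{0<|y|<r_\ast\}$ for $r_\ast$ small enough.

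A Khas'minskii-type stopping argument then closes the proof: letting $\tau_n:=\inf\{t:|y_t|\le1/n\}$ and $\zeta:=\inf\{t:|y_t|\ge r_\ast\}$, supermartingality of $V(y_{\cdot\wedge\tau_n\wedge\zeta})$ yields
\begin{align*}
\bP(\tau_n<\zeta\wedge t)\le \frac{V(y_0)}{\inf_{|y|=1/n}V(y)}=\frac{V(y_0)}{(\log n)^\beta}\longrightarrow0,
\end{align*}
so $y$ cannot reach $0$ during any excursion inside $\{|y|<r_\ast\}$. No explosion occurs in the compact region $|x|\le 2R$ by the local Lipschitz assumption, and a standard strong Markov/excursion argument then concatenates these facts into $\tau^x_n:=\inf\{t:|x_t|=n\}\uparrow\infty$ a.s., proving global existence. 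The main obstacle is dimension $d=2$: there $\log(1/|y|)$ is harmonic so that the natural candidates $V=|y|^{-\gamma}$ and $V=\log(1/|y|)$ both fail against an $|y|^{-\alpha}$ drift; the exponent trick $u^\beta$, $\beta\in(0,1)$, manufactures a strictly negative Laplacian uniformly in all $d\ge 2$, and is what allows the stated range to be covered.
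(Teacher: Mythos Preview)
Your argument is correct and effectively blends the paper's two proofs. The paper's first proof performs the same change of variables $y=|x|^{-\eta-1}x$ and obtains the additive-noise SDE for $y$, but then, instead of building a Lyapunov barrier at $0$, it invokes a Girsanov-type result for $L^p$ drifts (Fedrizzi--Flandoli) to show that the law of $y$ is equivalent to Wiener measure, whence $y$ avoids $0$ because $d$-dimensional Brownian motion does for $d\ge 2$. The paper's second proof skips the transformation and applies the Lyapunov-function method directly in the $x$-variables, using $V(x)=(\log|x|)^\alpha$ with $\alpha\in(0,1)$; note that your choice $V(y)=(\log(1/|y|))^\beta$ is precisely $\eta^\beta(\log|x|)^\beta$, so the two Lyapunov computations are the same calculation in different coordinates, and the paper flags the same $d=2$ obstruction you identify (harmonicity of $\log|x|$ forcing the exponent below $1$). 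What your route buys is self-containment: you avoid importing the well-posedness theory for SDEs with $L^p$ drifts, at the cost of a slightly more hands-on supermartingale/stopping argument. What the paper's second proof buys, by staying in $x$-coordinates, is a global Lyapunov inequality $LV\le -cV^\gamma+d$ on all of $\R^d$, which it then leverages for invariant-measure and geometric-ergodicity statements; your localized barrier near $y=0$ gives non-explosion but would need to be repackaged to recover those corollaries.
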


The particular form of the noise is due to the following remark: under the transformation
\begin{align}
y = \phi(x) := |x|^{-\eta-1}x,\label{eq:transform}
\end{align}
the noise $\sigma(x) \circ dW$ simply becomes an additive noise. This remark reveals the idea behind the non-explosion: by applying the transformation $\phi$, the explosion becomes a passage through $0$, and such passage can be prevented, for $d\ge 2$, by an additive noise. We will give two proofs of this theorem: one exploits directly this idea of transforming explosion in passage through $0$, the other uses the Lyapunov function method.

The second proof (by the Lyapunov function method) may be useful in infinite dimensions and yields also important consequences for invariant distributions for the SDE. Indeed the Lyapunov function structure gives not only the existence of an invariant distribution for the SDE, but also a strong form of geometric ergodicity, whose bounds are independent of the initial conditions, as soon as the noise is non-degenerate on the whole space. These results on invariant measures are given and proved in Section \ref{sec:inv_meas}.

The first proof exploits crucially the fact that $d\ge 2$: indeed, in dimension $1$, a diffusion can hit $0$ in general. Actually, not only Theorem \ref{thm:main} does not hold for $d=1$, but, when the explosion occurs for the deterministic ODE \eqref{eq:ODE} in $d=1$ and the drift $b$ has a distinguished sign, then the explosion also happens $\bP$-a.s.~with a quite general Stratonovich noise. This fact is shown in Section \ref{sec:counterex}.

The possibility to use the noise to restore existence or uniqueness or stability has been widely explored. Many works deal with uniqueness and regularity for ODEs with irregular drifts, perturbed by an additive noise, transport equations, perturbed by a linear transport noise, and fluid dynamic models, see e.g.~\cite{Ver1980, FlaGubPri2010, CatGub2016, MohNilPro2015,Fla2015,BiaFla2020}. Concerning the stabilization by noise, we mention \cite{CraFla1998, BiaBloYan2016} among many others.

The non-explosion by noise is also widely studied. The work \cite{Sch1993} proves non-explosion and existence of an invariant measure for a two-dimensional example perturbed by an additive noise (possibly degenerate in one direction), by exhibiting a Lyapunov function for the SDE. The papers \cite{HerMat2015_1, HerMat2015_2, BirHerWeh2012, AthKolMat2012} prove similar results for other examples, in particular \cite{AthKolMat2012} proposes a meta-algorithm to find Lyapunov functions. In these examples, on one side the directions of explosion are isolated, hence the idea behind the non-explosion phenomenon is that the noise moves the solution out of those directions. On the other side only additive noise is allowed, as often dictated by applications (for example, the equation in \cite{HerMat2015_1} comes from models in turbulent transport). The difficulty in proving non-explosion in such examples is often to find an appropriate, anisotropic Lyapunov function, in order to deal with different (explosive or not) behaviours in different regions of the space (see e.g.~\cite[Section 2.1]{AthKolMat2012}).

In the present paper instead a different viewpoint is considered: on one side every direction is possibly explosive, on the other side we search for a multiplicative noise which could avoid explosion. This research direction has also been studied in a number of works, see \cite{MaoMarRen2002, WuHu2009, LiuShe2012} among many others, mostly exploiting the Lyapunov function method. The closest results to ours seem \cite{AppMaoRod2008,Gar1988} and \cite{Has1960}. The results \cite[Proposition 3.3]{AppMaoRod2008} and \cite[Example 5.4]{Gar1988} show non-explosion by noise for a large class of drifts and diffusion coefficients, using respectively power-type and logarithmic Lyapunov functions. In particular, by writing our SDE \eqref{eq:SDE} in It\^o form, outside the ball $B_R$,
\begin{align*}
dX &= b(X)dt -\frac12 (1+\frac{1}{\eta})(d-1-\frac{1}{\eta})|X|^{2\eta}X dt\\
&\quad +|X|^{\eta+1} \left( I_d -(1+\frac{1}{\eta})\frac{XX^T}{|X|^2} \right) dW,
\end{align*}
we can recognize that our SDE falls in the class of \cite{AppMaoRod2008,Gar1988} for $d\ge 3$. The novelties here, with respect to \cite{AppMaoRod2008,Gar1988}, are the idea behind the first proof, based on the Stratonovich noise and the transformation in \eqref{eq:transform}, and the use of $(\log |x|)^\alpha$, for some $\alpha<1$, as Lyapunov function, to deal with the case $d=2$ (in \cite[Remark 3]{ChoKha2014} the use of a similar Lyapunov function is suggested). The Stratonovich noise arises as limit of smooth noises and is also widely used in SPDEs. In the SPDE context, we point out the papers \cite{GasGes2019, GesSmi2019}, among others, which also take non-linear noises to show regularization properties. We also mention the recent paper \cite{KCSW2019}, which shows non-explosion for a Hamiltonian ODE perturbed by an additive noise and a suitable drift term, which preserves the Hamiltonian structure.

The paper \cite{Has1960} establishes a criterion, sometimes called Khasminskii's test (generalization of Feller's test), for non-explosion for a multi-dimensional diffusion. According to this criterion (in the version in \cite[Section 4.5]{McK1969} and \cite[Theorem 10.2.3]{StrVar2006}), explosion is avoided if a certain integral condition holds for suitable radial functions of the coefficients of the SDE (in It\^o form). For our example, by the isotropic form of Assumptions \ref{hp}, it is not difficult to show such integral condition; hence we could prove our Theorem \ref{thm:main} also by an application of Khasminskii's test. With respect to this possible approach, the differences here are again the idea behind the first proof and the use of an explicit Lyapunov function in the second proof. As already mentioned, such Lyapunov function has important consequences at the level of invariant measures, see Section \ref{sec:inv_meas}.


Once non-explosion from any fixed $x_0$ is established, one could ask about finer properties, like global existence of a stochastic flow solving the SDE \eqref{eq:SDE}. We expect the answer to be negative. Indeed, in \cite{CarElw1983}, the authors construct an SDE by applying a similar transformation to bring $\infty$ into $0$ and vice versa and show the lack of a stochastic flow solution of that SDE; see also \cite{LiSch2011} and \cite{LeiSch2017} for a similar phenomenon respectively for a drift-less SDE with bounded, smooth coefficients and for the example in \cite{HerMat2015_1}.


\section{First proof}

In the first proof, we apply the transformation $Y=\phi(X)$ to the SDE \eqref{eq:SDE}; by the specific form of $\sigma$ we get an SDE for $Y$ with irregular drift $g(Y)$ and additive noise. The conditions on $m$ and $\alpha$ in Assumption \ref{hp} guarantee that this SDE for $Y$ admits a unique solution, whose law is equivalent to the Wiener measure (by Girsanov theorem). Since, for $d\ge 2$, the Wiener measure does not see the $0$ $\bP$-a.s.,~we conclude that $Y$ does not hit $0$ and hence $X$ does not explode $\bP$-a.s..

\begin{proof}[First proof]
Let $(X_t)_{t\in[0,\tau)}$ be the local maximal solution to the SDE \eqref{eq:SDE}. Recall that $\tau$ is the explosion time of $X$, or equivalently, on $\{\tau<\infty\}$, there holds $\lim_{t\nearrow\tau}|X_t| =\infty$ $\bP$-a.s.~(see e.g.~\cite[Corollary 6.2]{Elw1982}). We have to show that $\tau=\infty$ $\bP$-a.s.. For all $t$ such that $X_t\neq 0$, we let $Y_t=\phi(X_t)$, where $\phi$ is defined as in \eqref{eq:transform}. More precisely, to avoid the times when $X$ hits $0$ (and so $Y$ is not defined), we will show first that, $\bP$-a.s.,~$X$ enters $B_R$ before exploding, then we will use this fact to conclude the proof of non-explosion.

\textbf{First part}: We use the notation $\bP^{x_0}$ to keep track of the initial condition $x_0$. We define $\tau^{0,R} = \inf\{t\ge 0\mid X_t\in B_R\}$. We will show that
\begin{align}
\bP^{x_0}\{\tau\ge \tau^{0,R}\}=1,\quad \forall x_0 \in B_{R+1}^c.\label{eq:returning}
\end{align}
We start applying It\^o formula to $Y_t=\phi(X_t)$ for $t< \tau\wedge \tau_{0,R}$ (this is possible because $\phi$ is smooth on $B_R^c$). Assumption \ref{hp} on $\sigma$ gives
\begin{align*}
D\phi(x) = |x|^{-\eta-1}(I_d -(\eta+1)\frac{xx^T}{|x|^2}) = \sigma(x)^{-1},\quad \forall x\in B_R^c.
\end{align*}
Note that
\begin{align}
\begin{aligned}\label{eq:notation_stop_time}
&\tau=\rho^Y :=\inf\{t\ge 0\mid \lim_{s\nearrow t}Y_s=0\},\\
&\tau^{0,R} =\rho^{Y,R^{-\eta}} :=\{t\ge 0\mid Y_t\in B_{R^{-\eta}}^c\}.
\end{aligned}
\end{align}
Hence the following SDE holds for $Y$ on $[0,\rho^Y\wedge \rho^{Y,R^{-\eta}})$:
\begin{align}
\begin{aligned}\label{eq:SDE_Y}
dY &= |Y|^{(\eta+1)/\eta}(I_d -(\eta+1)\frac{YY^T}{|Y|^2}) b(|Y|^{-1/\eta-1}Y) dt +dW\\
&=: g(Y) dt +dW,\\
Y_0 &=\phi(x_0) \in \bar{B}_{(R+1)^{-\eta}}\setminus\{0\}.
\end{aligned}
\end{align}
Since $Y$ lives in $B_{R^{-\eta}}$, we can set $g=0$ on $\bar{B}_{R^{-\eta}}^c$. Since $b$ is locally Lipschitz, $g$ is locally Lipschitz on $\bar{B}_{R^{-\eta}}\setminus\{0\}$. Moreover, Assumption \ref{hp} for $b$ gives, for some constant $C$,
\begin{align*}
|g(y)|\le C|y|^{(\eta+1)/\eta} \cdot (|y|^{-1/\eta})^m = C|y|^{(\eta+1-m)/\eta}, \quad \forall y\in B_{R^{-\eta}}.
\end{align*}
By Assumption \ref{hp} on $m$ and $\eta$, we have $(\eta+1-m)/\eta >-1$, so the drift $g$ is in $L^p(\R^d)$ for some $p>d$ (and $d\ge 2$). We are now in the position to apply \cite[Theorem 1, Corollary 16]{FedFla2011}: the SDE \eqref{eq:SDE_Y} admits a global (strong) solution $\tilde{Y}$ whose law is equivalent to the $d$-dimensional Wiener measure starting from $\phi(x_0)$. But the SDE \eqref{eq:SDE_Y} admits also a unique strong solution before exiting $\bar{B}_{R^{-\eta}}\setminus\{0\}$, by the local Lipschitz property of $g$, and hence $\tilde{Y}=Y$ on $[0,\rho^Y\wedge \rho^{Y,R^{-\eta}})$.
In particular,
\begin{align*}
\text{Law}(Y\mid_{[0,\rho^Y\wedge \rho^{Y,R^{-\eta}})})\text{ and } \text{Law}(W^{x_0}\mid_{[0,\rho^{W^{x_0}}\wedge \rho^{W^{x_0},R^{-\eta}})})
\end{align*}
are equivalent, where $W^{x_0}:=W+\phi(x_0)$ and $\rho^{W^{x_0}}$ and $\rho^{W^{x_0},R^{-\eta}}$ are defined for $W^{x_0}$ as in \eqref{eq:notation_stop_time}. Now, for $d\ge 2$, for any $x_0$, $W+\phi(x_0)$ does not hit $0$ with probability $1$, that is $\rho^{W^{x_0}}=\infty$ $\bP$-a.s.~(see e.g.~\cite[Chapter V, Proposition 2.7]{RevYor1999}). Hence we have $\rho^Y\ge \rho^{Y,R^{-\eta}}$ $\bP$-a.s.~and so $\tau\ge \tau^{0,R}$ $\bP$-a.s.,~that is \eqref{eq:returning}.

\textbf{Second part}: We use a standard argument. We recall that $\tau^{0,R} = \inf\{t\ge 0\mid X_t\in B_R\}$ and we define recursively, for $i$ nonnegative integer,
\begin{align*}
\tau^{i+1,R+1} = \inf\{t>\tau^{i,R} \mid X_t\notin B_{R+1}\},
\tau^{i+1,R} = \inf\{t>\tau^{i+1,R+1} \mid X_t\in B_R\},
\end{align*}
that is the $(i+1)$-th exit time from $B_{R+1}$ and the $(i+1)$-th hitting time of $B_R$. The property \eqref{eq:returning} and the strong Markov property of $X$ imply, by induction taking $x_0 = X_{\tau^{i,R+1}}$ at each step, that $\tau\ge \tau^{i,R}$ $\bP$-a.s.~for every $i$. On the other hand, since $\lim_{t\nearrow\tau}|X_t| =\infty$ $\bP$-a.s.~on $\{\tau<\infty\}$, then $\bP$-a.s.,~the sequence $\tau^{i,R}$ has no finite accumulation point, that is $\sup_i \tau^{i,R} =\infty$ $\bP$-a.s.. It follows that $\tau=\infty$ $\bP$-a.s.. The proof is complete.
\end{proof}

\section{Second proof}

In the second proof, we show that $(\log|x|)^\alpha$, with $0<\alpha<1$, is morally a Lyapunov function for the SDE \eqref{eq:SDE}. This argument not only implies non-explosion, but has also consequences on invariant measures for the SDE, as we will see in the next section.

\begin{proof}[Second proof]
We fix $0<\alpha<1$ and take a $C^2$ function $V:\R^d\rightarrow \R$ such that, for a constant $a>0$,
\begin{align*}
&V(x) = (\log|x|)^\alpha,\quad \forall x \in B_{2\vee R}^c,\\
&V(x)\ge a,\quad \forall x\in \R^d,
\end{align*}
and we show that such $V$ is a Lyapunov function for the SDE \eqref{eq:SDE}, that is
\begin{itemize}
\item $V$ is nonnegative and $\inf_{|x|>r} V(x)$ tends to $\infty$ as $r\rightarrow\infty$ and
\item $LV\le cV$ on $\R^d$ for a constant $c>0$, where $L$ is the generator of the SDE \eqref{eq:SDE}.
\end{itemize}
The first condition is clearly satisfied. For the second condition, we write the SDE \eqref{eq:SDE} in It\^o form:
\begin{align*}
dX = \tilde b(X)dt +\sigma(X) dW,
\end{align*}
where $\tilde b$ is locally Lipschitz and
\begin{align*}
\tilde b(x) = b(x) -\frac12 (1+\frac{1}{\eta})(d-1-\frac{1}{\eta})|x|^{2\eta}x, \quad \forall x\in B_R^c.
\end{align*}
This can be verified through a tedious computation of the It\^o-Stratonovich correction of \eqref{eq:SDE} or applying It\^o formula (in It\^o form) to $X=\phi^{-1}(Y)$, where $Y$ satisfies \eqref{eq:SDE_Y}. For $x$ in $B_{2\vee R}^c$, we have then
\begin{align*}
&\nabla V(x)= \alpha (\log|x|)^{\alpha-1}|x|^{-2} x,\\
&D^2V(x) = \alpha (\log|x|)^{\alpha-1}|x|^{-2} \left( I_d -\left(2+(1-\alpha)(\log|x|)^{-1}\right)\frac{xx^T}{|x|^2} \right),\\
&\sigma(x)\sigma(x)^T = |x|^{2\eta+2} \left(I_d- (1+\frac{1}{\eta})\frac{xx^T}{|x|^2}\right)^2 = |x|^{2\eta+2} \left(I_d +(-1+\frac{1}{\eta^2})\frac{xx^T}{|x|^2}\right),
\end{align*}
and so
\begin{align}
LV(x) &= \tilde b(x)\cdot \nabla V(x) +\frac12 \tr[\sigma(x)\sigma(x)^TD^2V(x)]\nonumber\\
&= \alpha (\log|x|)^{\alpha-1} b(x) \cdot |x|^{-2} x -\frac12 \alpha (\log|x|)^{\alpha-1} |x|^{2\eta} (1+\frac{1}{\eta})(d-1-\frac{1}{\eta})\nonumber\\
&\quad +\frac12 \alpha (\log|x|)^{\alpha-1} |x|^{2\eta} \tr\left[ I_d -(1+\frac{1}{\eta^2}+\frac{1-\alpha}{\eta^2\log|x|})\frac{xx^T}{|x|^2} \right]\nonumber\\
&= \alpha (\log|x|)^{\alpha-1} b(x) \cdot |x|^{-2} x -\frac12 \alpha (\log|x|)^{\alpha-1} |x|^{2\eta} \left(\frac{d-2}{\eta} +\frac{1-\alpha}{\eta^2\log|x|}\right)\nonumber\\
&\le (\log|x|)^{\alpha-1} \left( c_1|x|^{m-1} -c_2\frac{1}{\log|x|}|x|^{2\eta} \right) \label{eq:Lyapunov}
\end{align}
for some constants $c_1,c_2>0$. By the condition $2\eta >m-1$ in Assumption \ref{hp}, there exists $r>0$ such that $LV(x)$ is negative for all $x$ outside $B_r$. Therefore, since $LV$ is locally bounded and $V\ge a>0$, the condition $LV\le cV$ is satisfied on $\R^d$ for a suitable $c$ and so $V$ is a Lyapunov function. Hence, by \cite[Theorem 3.5]{Kha2012}, there exists a global solution to the SDE \eqref{eq:SDE}. The proof is complete.
\end{proof}

\section{Invariant measures and geometric ergodicity}\label{sec:inv_meas}

In this section we exploit the Lyapunov function structure to show existence and uniqueness of an invariant probability measure for the SDE and a strong form of geometric ergodicity. We remind that a probability measure $\mu$ on $\mr^d$ is invariant for the SDE \eqref{eq:SDE} if it is invariant under the Markov semigroup associated with \eqref{eq:SDE}, or equivalently if there exists $X^\mu$ solution to \eqref{eq:SDE} (with random initial condition) such that $X^\mu_t$ has law $\mu$ for any $t\ge 0$.

The existence of an invariant (probability) measure for the SDE \eqref{eq:SDE} is a consequence of the existence of a Lyapunov function:

\begin{proposition}
Under Assumption \ref{hp}, the SDE \eqref{eq:SDE} in $\R^d$, with $d\ge 2$, admits an invariant probability measure.
\end{proposition}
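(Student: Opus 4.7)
The plan is to upgrade the Lyapunov estimate from the second proof of Theorem \ref{thm:main} into a drift condition strong enough to force tightness of time-averaged distributions, and then to conclude by the Krylov--Bogoliubov procedure. Inspecting the inequality \eqref{eq:Lyapunov}, the factor $(\log|x|)^{\alpha-1}$ is positive and tends to $0$, while the term $-c_2 |x|^{2\eta}/\log|x|$ dominates $c_1 |x|^{m-1}$ thanks to $2\eta > m-1$; hence $LV(x) \to -\infty$ as $|x| \to \infty$. Consequently, there exist $\gamma > 0$, a radius $r_0$, and a constant $K$ such that
\begin{align*}
LV(x) \le -\gamma \quad \forall\, x \in \bar{B}_{r_0}^c, \qquad LV(x) \le K \quad \forall\, x \in \R^d.
\end{align*}

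Next, I fix an initial condition $x_0$ and let $X$ be the global solution to \eqref{eq:SDE} provided by Theorem \ref{thm:main}. To circumvent the unboundedness of $V$, $\nabla V$ and $LV$, I would apply It\^o's formula to $V(X_{t\wedge\tau_n})$ with $\tau_n := \inf\{t\ge 0\mid |X_t|\ge n\}$, take expectations (the martingale term vanishes because $\nabla V\cdot \sigma$ is bounded up to $\tau_n$), and arrive at
\begin{align*}
0 \le \bE[V(X_{t \wedge \tau_n})] \le V(x_0) - \gamma\, \bE[t \wedge \tau_n] + (K+\gamma) \int_0^t \bP(X_{s \wedge \tau_n} \in \bar{B}_{r_0}) \, ds.
\end{align*}
Since $\tau_n \uparrow \infty$ almost surely by non-explosion, letting $n \to \infty$ (Fatou on the left, dominated convergence on the right) yields
\begin{align*}
\frac{1}{t}\int_0^t \bP(X_s \in \bar{B}_{r_0})\, ds \;\ge\; \frac{\gamma}{K+\gamma} - \frac{V(x_0)}{t(K+\gamma)}.
\end{align*}
Hence the family of time-averaged occupation measures $\mu_t := \frac{1}{t}\int_0^t \mathrm{Law}(X_s)\, ds$ is tight on $\R^d$.

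The final step is the Krylov--Bogoliubov theorem: the Feller property of the Markov semigroup associated with \eqref{eq:SDE} follows from the local Lipschitz continuity of the coefficients together with the non-explosion of Theorem \ref{thm:main}, via standard continuous dependence of SDE solutions on initial data. Any weak subsequential limit of $\mu_{t_k}$ along $t_k \to \infty$ is therefore an invariant probability measure for \eqref{eq:SDE}. I expect the only mildly delicate point to be the limit passage in the It\^o estimate, which is routine once non-explosion is in hand; alternatively one may cite directly \cite[Theorem 4.1]{Kha2012} or an analogous Meyn--Tweedie-type statement, which packages precisely the implication Lyapunov condition $\Rightarrow$ existence of an invariant probability measure.
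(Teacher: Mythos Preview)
Your approach is essentially the paper's: both extract from \eqref{eq:Lyapunov} that $LV(x)\to -\infty$ as $|x|\to\infty$ and feed this into the standard Lyapunov $\Rightarrow$ invariant-measure machinery. The paper simply cites \cite[Theorem 3.7]{Kha2012} at that point, whereas you unpack the Krylov--Bogoliubov argument by hand (and indeed mention the direct citation as an alternative).

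One slip worth flagging: with $\gamma$ and $r_0$ \emph{fixed} as you introduce them, your displayed inequality only yields $\liminf_{t\to\infty}\mu_t(\bar B_{r_0})\ge \gamma/(K+\gamma)$, a fixed number strictly less than $1$; this is not yet tightness of $(\mu_t)_t$. The fix is immediate from what you already observed: since $LV\to -\infty$ at infinity, for every $\gamma>0$ there is a radius $r_\gamma$ with $LV\le -\gamma$ on $\bar B_{r_\gamma}^c$, while $K:=\sup_{\R^d} LV<\infty$ stays fixed. Running your It\^o estimate with $r_\gamma$ in place of $r_0$ gives $\limsup_{t\to\infty}\mu_t(\bar B_{r_\gamma}^c)\le K/(K+\gamma)$, which is $<\epsilon$ once $\gamma$ is large enough. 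That furnishes the tightness you need, and the rest of your argument goes through.
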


\begin{proof}
The inequality \eqref{eq:Lyapunov} implies that
\begin{align*}
\sup_{|x|>r}LV(x) \rightarrow -\infty \quad \text{as }r\rightarrow \infty.
\end{align*}
Hence the result follows from \cite[Theorem 3.7]{Kha2012}.
\end{proof}

Actually the inequality \eqref{eq:Lyapunov} gives more than the Lyapunov function property and the existence of an invariant measure. Indeed, in the language of \cite{AthKolMat2012}, \eqref{eq:Lyapunov} shows that $V$ is a super-Lyapunov function. As a consequence, if the noise is non-degenerate on $B_R$, we get geometric ergodicity in the total variation norm, whose bounds are independent of the initial conditions (see the discussion at the end of this section).

In the following, we use the notation $(P_t)_t$ for the Markov semigroup associated with the SDE \eqref{eq:SDE} and, given a probability measure $\mu$ on $\mr^d$, we use $P_t^*\mu$ for the transformation of $\mu$ under the semigroup $P_t$, namely
\begin{align*}
&P_tf(x_0) = \bE^{x_0}[f(X_t)],\quad x_0\in \mr^d,f\in C_b(\mr^d),\\
&\int_{\mr^d} f(x)P_t^*\mu(dx) = \int_{\mr^d} P_tf(x)\mu(dx),\quad f\in C_b(\mr^d).
\end{align*}
$\mc{P}(\mr^d)$ denotes the set of probability measures on $\mr^d$ and $d_{TV}$ denotes the total variation distance between measures. We also introduce the following weighted total variation distance:
\begin{align*}
d_1(\mu_1,\mu_2) = \sup_{\varphi\in C(\mr^d),\,\|\varphi/(1+V)\|_{\infty}\le 1} \int_{\mr^d} \varphi d(\mu_1-\mu_2),\quad \mu_1,\mu_2\in \mc{P}(\mr^d).
\end{align*}
It is easy to check that $d_{TV}\le d_1$.

\begin{theorem}\label{thm:geom_erg}
Assume Assumption \ref{hp}. Assume also that, for some constant $\lambda>0$,
\begin{align}
\sigma(x)\sigma(x)^T \ge \lambda I_d,\quad \forall x\in B_R. \label{eq:elliptic}
\end{align}
Then the invariant distribution for the SDE \eqref{eq:SDE} on $\mr^d$, with $d\ge 2$, is unique and the following geometric ergodicity property holds in the total variation distance: there exist $C>0$, $\eta>0$ such that,
\begin{align}
d_1(P_t^*\mu_1,P_t^*\mu_2) \le Ce^{-\eta t}d_{TV}(\mu_1,\mu_2), \quad \forall t\ge 0, \quad \forall \mu_1,\mu_2 \in \mc{P}(\mr^d).\label{eq:exp_conv} 
\end{align}
In particular, there holds
\begin{align*}
d_{TV}(P_t^*\mu_1,P_t^*\mu_2) \le Ce^{-\eta t}d_{TV}(\mu_1,\mu_2), \quad \forall t\ge 0, \quad \forall \mu_1,\mu_2 \in \mc{P}(\mr^d).
\end{align*}
\end{theorem}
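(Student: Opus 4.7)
The plan is to interpret \eqref{eq:Lyapunov} as a \emph{super-Lyapunov} drift inequality in the sense of \cite{AthKolMat2012}, pair it with a standard minorization on a compact set coming from the local ellipticity \eqref{eq:elliptic}, and then invoke the corresponding Harris-type contraction theorem; uniqueness of the invariant measure and the total-variation bound follow immediately.

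\textbf{Super-Lyapunov inequality.} From \eqref{eq:Lyapunov} and $2\eta>m-1$ one has, for some $r_0,c>0$,
\begin{align*}
LV(x)\le -c\,|x|^{2\eta}(\log|x|)^{\alpha-2}\quad\text{for }|x|\ge r_0,
\end{align*}
so that $LV(x)/V(x)\to -\infty$ as $|x|\to\infty$. Since $V\ge a>0$ and $LV$ is locally bounded, this strengthens to the following: for every $\kappa>0$ there exists $K_\kappa<\infty$ with $LV\le -\kappa V+K_\kappa$ on $\R^d$. This is precisely the super-Lyapunov property, and it is the quantitative source of the fact that the unweighted $d_{TV}$ (and not the $V$-weighted distance) appears on the right-hand side of \eqref{eq:exp_conv}.

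\textbf{Minorization.} The matrix $\sigma\sigma^T$ is bounded below on every compact set: \eqref{eq:elliptic} handles $B_R$, while on $B_R^c$ the computation in the second proof gives eigenvalues $|x|^{2\eta+2}$ and $|x|^{2\eta+2}/\eta^2$, both strictly positive. Together with local Lipschitz regularity of $\tilde b$ and $\sigma$, standard uniformly parabolic theory supplies a continuous, strictly positive transition density at any $t_0>0$ on any compact set. Choosing a sublevel set $\mathcal{C}=\{V\le L\}$ with $L$ large enough that Step~1 pulls trajectories back to $\mathcal{C}$, one obtains a minorization $P_{t_0}(x,\cdot)\ge \beta\,\nu$ for all $x\in\mathcal{C}$ and some probability measure $\nu$. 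Feeding these two ingredients into the super-Lyapunov contraction theorem of \cite{AthKolMat2012}, applied at the skeleton time $t_0$ and then interpolated in $t$, yields \eqref{eq:exp_conv}; uniqueness follows by applying \eqref{eq:exp_conv} to any two invariant measures, and $d_{TV}\le d_1$ gives the stated total-variation bound.

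\textbf{Main obstacle.} The delicate step is the upgrade in Step~1 from the pointwise decay of $LV/V$ to the uniform bound $LV\le -\kappa V+K_\kappa$ for \emph{every} $\kappa>0$, since this is exactly what distinguishes a super-Lyapunov function from an ordinary Foster-Lyapunov one and is responsible for the instantaneous smoothing from $d_{TV}$ to $d_1$ in \eqref{eq:exp_conv}. It is however a direct consequence of $LV/V\to-\infty$ together with the local boundedness of $LV$ and the lower bound $V\ge a>0$, so it amounts to tuning the additive constant on a compact region and does not require any new analytic input beyond \eqref{eq:Lyapunov}.
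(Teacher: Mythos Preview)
Your strategy coincides with the paper's: read off a super-Lyapunov inequality from \eqref{eq:Lyapunov}, extend the ellipticity \eqref{eq:elliptic} from $B_R$ to all of $\R^d$ via the explicit eigenvalues of $\sigma\sigma^T$, derive a local minorization from a parabolic lower bound (the paper cites the Gaussian bound of \cite{AroSer1967}), and invoke \cite{AthKolMat2012}.

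One point needs sharpening. The super-Lyapunov condition actually used in \cite{AthKolMat2012}, and recorded in the paper as Theorem~\ref{thm:superLyap_minor}, is $LV\le -cV^\gamma+d$ for some $\gamma>1$, not your ``for every $\kappa>0$ there exists $K_\kappa$ with $LV\le -\kappa V+K_\kappa$''. Your version is strictly weaker: it only encodes $LV/V\to-\infty$, whereas the $\gamma>1$ form is what makes $\bE^{x_0}[V(X_t)]$ bounded \emph{uniformly in $x_0$} after any positive time (compare with the ODE $\dot v=-cv^\gamma$, whose solutions fall below a fixed level in finite time when $\gamma>1$), and this uniform return is precisely what puts the unweighted $d_{TV}$ on the right of \eqref{eq:exp_conv}. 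The repair is immediate here: \eqref{eq:Lyapunov} gives $LV(x)\le -c'\,|x|^{2\eta}(\log|x|)^{\alpha-2}$ for large $|x|$, and since $|x|^{2\eta}$ dominates every power of $V(x)=(\log|x|)^\alpha$, one obtains $LV\le -cV^\gamma+d$ for any $\gamma>1$.
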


The proof is morally a consequence of \cite[Lemma 6.1, Proposition 6.2, Theorem 12.1]{AthKolMat2012}; these results are given in \cite{AthKolMat2012} for an SDE on $\mr^2$, but the arguments in the proofs hold for a general SDE on $\mr^d$ (for any $d$) with locally Lipschitz drift and diffusion coefficients. The following result follows immediately from \cite[Lemma 6.1, Proposition 6.2, Theorem 12.1]{AthKolMat2012}:

\begin{theorem}\label{thm:superLyap_minor}
Assume that $V:\mr^d\rightarrow \mr$ is a super-Lyapunov function for the SDE \eqref{eq:SDE}, namely a $C^2$, strictly positive function such that $\lim_{|x|\rightarrow\infty}V(x) = \infty$ and, for suitable constants $\gamma>1$, $c,d>0$,
\begin{align}
LV(x) \le -cV(x)^\gamma +d,\quad \forall x\in\mr^d.\label{eq:super_Lyap}
\end{align}
Assume also the following minorization condition: there exist $T>0$, $\alpha>0$, $r>0$ and a probability measure $\nu$ on $\mr^d$ such that
\begin{align}
\inf_{z\in\mr^d,|z|\le r} P_T^*\delta_z(B) \ge \alpha\nu(B), \quad \forall B\in \mc{B}(\mr^d),\label{eq:minor}
\end{align}
with $r$ satisfying
\begin{align*}
r> K_T:=\max \{ (2d/c)^{1/\gamma}, (c(\gamma-1)T/2)^{-1/(\gamma-1)} \}.
\end{align*}
Then the conclusions of Theorem \ref{thm:geom_erg} hold.
\end{theorem}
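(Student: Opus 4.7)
My plan is to deduce the theorem from a Harris--type coupling argument, combining the super-Lyapunov drift condition \eqref{eq:super_Lyap} with the minorization \eqref{eq:minor}, exactly along the lines of \cite{AthKolMat2012}.

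The first step would be to derive the uniform-in-initial-condition moment bound $\sup_{x\in\mr^d}\bE^x[V(X_t)] \le K_t$ for every $t>0$. After a localization to handle the possible growth of $V$, It\^o's formula applied to $V(X_t)$, combined with \eqref{eq:super_Lyap} and Jensen's inequality (which uses crucially $\gamma>1$), yields
\begin{align*}
\frac{d}{dt}\bE^x[V(X_t)] \le -c\,\bigl(\bE^x[V(X_t)]\bigr)^\gamma +d.
\end{align*}
A comparison with the scalar ODE $\dot u = -cu^\gamma +d$, whose solutions are bounded above at time $t$ by $K_t$ irrespective of the starting point, gives the claim. This self-improving feature, absent for a classical Lyapunov function, is what will make all subsequent bounds independent of the initial law.

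The second step would be to translate this into a small-set estimate for the discrete-time chain $\{X_{kT}\}_k$. Markov's inequality together with $r>K_T$ implies that, starting from any $x\in\mr^d$, the process $X_T$ lies in $\{|\cdot|\le r\}$ with probability bounded below uniformly in $x$; composed with the minorization \eqref{eq:minor} inside $\{|\cdot|\le r\}$, this produces constants $\beta\in(0,1)$ and a probability measure $\tilde\nu$ with $P_T^*\delta_x \ge \beta\tilde\nu$ for every $x\in\mr^d$. Iterating this together with the drift estimate from step one falls exactly into the Hairer--Mattingly framework used in \cite[Theorem 12.1]{AthKolMat2012}, giving geometric contraction of $P_{kT}^*$ in the weighted norm $d_1$ at rate $e^{-\eta kT}$. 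An interpolation over $s\in[0,T)$, again exploiting the uniform bound from step one, promotes this to the continuous-time statement \eqref{eq:exp_conv}, after which $d_{TV}\le d_1$ yields the final consequence.

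The main obstacle is that the right-hand side of \eqref{eq:exp_conv} should depend only on $d_{TV}(\mu_1,\mu_2)$, with no $V$-moment of the initial measures appearing. This strong form of ergodicity is available precisely because the super-Lyapunov bound forces $\bE^\mu[V(X_t)]$ to become finite and uniformly controlled for every $t>0$, even when $\mu$ has no initial $V$-moment. Carrying this uniformity through the coupling construction, and invoking the minorization only after the process has absorbed its initial $V$-mass, is the technical heart of the argument; this is exactly what distinguishes the super-Lyapunov setting from a classical Meyn--Tweedie one.
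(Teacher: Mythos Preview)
Your proposal is correct and aligns exactly with the paper's approach: the paper does not give an independent proof of this theorem but simply states that it follows immediately from \cite[Lemma 6.1, Proposition 6.2, Theorem 12.1]{AthKolMat2012}, and your outline is precisely a sketch of those three results (the uniform-in-$x$ moment bound via the ODE comparison, the resulting uniform small-set property, and the Harris/Hairer--Mattingly contraction). There is nothing to add or correct.
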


\begin{proof}[Proof of Theorem \ref{thm:geom_erg}]
By Theorem \ref{thm:superLyap_minor}, it is enough to show the super-Lyapunov function property \eqref{eq:super_Lyap} and the minorization condition \eqref{eq:minor}.

The property \eqref{eq:super_Lyap} follows immediately from \eqref{eq:Lyapunov} (and positivity of $V$), by the condition $m-1>2\eta$ from Assumption \ref{hp}.

For the minorization condition, we note that, by the explicit computation of $\sigma(x)\sigma(x)^T$ on $B_R^c$, the uniform ellipticity condition \ref{eq:elliptic} holds actually on the whole $\mr^d$. Hence a classical Gaussian lower bound holds (see e.g.~\cite[Theorem 7]{AroSer1967}): for every $r>0$, there exist $c_1,c_2>0$ such that, for all $z\in \bar{B}_r$, for all $t>0$ sufficiently small,
\begin{align*}
P_t^*\delta_z \ge c_1 t^{-d/2} e^{-c_2|x-z|^2/t} 1_{|x-z|<2r}dx.
\end{align*}
Hence the minorization condition \eqref{eq:minor} holds for any $r>0$, taking as $\nu$ the normalized Lebesgue measure on $B_r$. The proof is complete.
\end{proof}

The main strength of Theorem \ref{thm:geom_erg} is in the exponential convergence to the invariant measure and in the use of the total variation distance, instead of the weighted $d_1$ distance, in the right-hand side of \eqref{eq:exp_conv}. It is known that condition \eqref{eq:super_Lyap} with $\gamma=1$ and a mixing condition guarantee exponential convergence to the invariant measure, but only in the $d_1$ norm, see e.g.~\cite{MeyTwe2009,HaiMat2011}: roughly speaking, this gives, calling $\bar{\mu}$ the invariant measure for the SDE,
\begin{align*}
d_{TV}(P_t^*\delta_z,\bar{\mu}) \le Ce^{-\eta t}(1+V(z)).
\end{align*}
When condition \eqref{eq:super_Lyap} is satisfied only with $\gamma<1$, then polynomial convergence holds instead of exponential convergence (see e.g.~\cite{Ver1997,Ver1999,DouForGui2009}). As discovered in \cite{AthKolMat2012}, the existence of a super-Lyapunov function gives exponential convergence in the total variation norm: in particular, one has
\begin{align*}
d_{TV}(P_t^*\delta_z,\bar{\mu}) \le Ce^{-\eta t},
\end{align*}
without any weight on $z$ in the right-hand side; in this sense, the bounds in Theorem \ref{thm:geom_erg} are independent of the initial conditions.

\section{Counterexample in one dimension}\label{sec:counterex}

In this section we show a counterexample to Theorem \ref{thm:main} when $d=1$; actually, we show that an explosive ODE does also explode under a very general Stratonovich noise. The fact that noise cannot avoid explosion when $d=1$ has been already proved in \cite{Sch1995} in the case of additive noise.

We consider the SDE \eqref{eq:SDE} on $\mr$, with $b$ locally Lipschitz and $\sigma$ $C^1$ with locally Lipschitz derivative. We assume that $b$ and $\sigma$ are always strictly positive. Under these conditions, it is well-known that, for every $x_0$, the solution to the ODE \eqref{eq:ODE} on $\mr$ admits the explicit representation
\begin{align*}
x(t) = B^{-1}(B(x_0)+t),
\end{align*}
where $B(x) = \int_0^x 1/b(y) dy$. In particular, explosion holds for the ODE \eqref{eq:ODE} if and only if
\begin{align}
\int_0^\infty \frac{1}{b(z)} dz < \infty.\label{eq:expl_1d}
\end{align}

\begin{proposition}
Assume that $b:\mr\rightarrow\mr$ is locally Lipschitz and that $\sigma:\mr\rightarrow\mr$ is $C^1$ with locally Lipschitz derivative; assume that $b$ and $\sigma$ are always strictly positive. Assume condition \eqref{eq:expl_1d}. Then, in dimension $d=1$, for every $x_0$ in $\mr$, the SDE \eqref{eq:SDE} explodes $\bP$-a.s..
\end{proposition}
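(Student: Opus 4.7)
\emph{Plan.} My approach is to reduce to the additive-noise case by a Lamperti-type transform and then invoke Feller's explosion test (or the additive-noise result \cite{Sch1995}) on the transformed equation.

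Set $\phi(x) := \int_0^x dy/\sigma(y)$; since $\sigma > 0$ is $C^1$, $\phi$ is a $C^2$ diffeomorphism of $\mathbb{R}$ onto an open interval $(a_-, a_+)$ with $a_- \in [-\infty, 0)$ and $a_+ \in (0, +\infty]$. Put $Y_t := \phi(X_t)$. The Stratonovich chain rule gives
$$dY_t = g(Y_t)\,dt + dW_t \quad \text{on } (a_-, a_+), \qquad g := (b/\sigma) \circ \phi^{-1},$$
with $g > 0$ locally Lipschitz. Since $\phi$ is an increasing bijection, explosion of $X$ at time $\tau$ corresponds exactly to $Y$ exiting $(a_-, a_+)$ at time $\tau$; thus it suffices to show $\bP$-a.s.\ finite-time exit of $Y$. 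The change of variables $y = \phi(x)$ transforms \eqref{eq:expl_1d} into
$$\int_c^{a_+} \frac{dy}{g(y)} = \int_{\phi^{-1}(c)}^{+\infty} \frac{dx}{b(x)} < \infty,$$
so $1/g$ is integrable at $a_+$.

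I then apply Feller's explosion test (Khasminskii's criterion) to the SDE for $Y$. The positivity of $g$ ensures the lower boundary $a_-$ is unattainable: the drift pushes $Y$ uniformly away from $a_-$, and the associated Feller integral at $a_-$ diverges. For the upper boundary, a standard asymptotic computation of the Feller function
$$v(a_+) = \int^{a_+} e^{-2G(y)} \int_c^y 2 e^{2G(z)}\,dz\,dy, \qquad G' = g,$$
shows $v(a_+) < \infty$: an integration by parts gives the inner integral $\sim e^{2G(y)}/(2g(y))$ near $a_+$, so the full integrand decays like $1/g(y)$, which is integrable by the previous step. Hence $a_+$ is attainable with positive probability in finite time; an iteration of the strong Markov property, together with the unattainability of $a_-$, upgrades this to $\bP$-a.s.\ exit of $Y$ at $a_+$ in finite time, giving the desired explosion of $X$.

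The main technical obstacle is the asymptotic analysis underlying $v(a_+) < \infty$, which must be handled with some care depending on whether $G(a_+) = +\infty$ or $G(a_+) < +\infty$. A cleaner alternative is a case split: when $a_+ = +\infty$ (i.e., $\sigma$ is not integrable at $+\infty$), the transformed SDE has additive noise on an interval reaching $+\infty$ and one can apply \cite{Sch1995} to $Y$ directly; when $a_+ < +\infty$, $Y$ lives on a bounded interval and an elementary argument using that $g$ is bounded below by a positive constant on each compact subinterval of $(a_-, a_+)$ suffices.
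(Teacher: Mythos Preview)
Your approach is essentially the paper's: apply the Lamperti transform $\phi(x)=\int_0^x \sigma^{-1}$, obtain $dY=g(Y)\,dt+dW$ with $g=(b/\sigma)\circ\phi^{-1}>0$, note that $\int^{a_+}1/g<\infty$ by change of variables, and split on whether $a_+=\phi(\infty)$ is finite or infinite, invoking \cite{Sch1995} and Feller's test in the latter case.

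Two points deserve comment. First, your claim that positivity of $g$ makes $a_-$ unattainable is not justified when $a_->-\infty$: a strictly positive drift does not prevent a one-dimensional diffusion from reaching a finite left endpoint (take $dY=dt+dW$ on $(-1,1)$). This is harmless for the conclusion, since exit of $Y$ through $a_-$ is explosion of $X$ to $-\infty$; but your iteration argument, which relies on unattainability of $a_-$ to upgrade ``hits $a_+$ with positive probability'' to ``a.s.'', does not go through as written. You should instead argue directly that $Y$ exits $(a_-,a_+)$ a.s., without caring through which end.

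Second, for the case $a_+<\infty$ the paper's argument is slicker than the Feller computation you sketch: since $g>0$, a pathwise comparison gives $Y_t\ge \phi(x_0)+W_t$ for all $t$ before exit; Brownian motion a.s.\ hits the finite level $a_+$ in finite time, and by that time $Y$ must already have exited. This one-line comparison replaces your asymptotic analysis of $v(a_+)$ and also sidesteps the $a_-$ issue entirely.
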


The idea of the proof is not far from the first proof of Theorem \ref{thm:main}: we use the transformation $\phi$ with $D\phi(x)=\sigma(x)^{-1}$.

\begin{proof}
We let
\begin{align*}
\phi(x) = \int_0^x \sigma(y)^{-1} dy,\quad \phi(-\infty)<x<\phi(\infty).
\end{align*}
By It\^o formula, $Y=\phi(X)$ satisfies the following SDE:
\begin{align*}
dY = A(Y)dt +dW,\quad Y_0=\phi(x_0),
\end{align*}
where
\begin{align*}
A(y) = \frac{b(\phi^{-1}(y))}{\sigma(\phi^{-1}(y))}.
\end{align*}
Hence it is enough to show that $Y$ hits $\phi(\infty)$ in finite time $\bP$-a.s.. We distinguish two cases: $\phi(\infty)<\infty$ or $=\infty$. If $\phi(\infty)<\infty$, since $Y_t\ge \phi(x_0)+W_t$, then $Y$ hits $\phi(\infty)$ in finite time $\bP$-a.s.. If $\phi(\infty)=\infty$, we have, by condition \eqref{eq:expl_1d} and a change of variable,
\begin{align*}
\int_0^\infty \frac{1}{A(y)} dy = \int_0^\infty \frac{1}{b(y)} dx <\infty.
\end{align*}
By \cite[Corollary 2]{Sch1995}, we get
\begin{align*}
2\int_0^\infty \int_0^\infty \exp\left[-2\int_y^{y+z} A(u)du\right] dzdy \le \int_0^\infty \frac{1}{A(y)} dy <\infty.
\end{align*}
Hence we can apply Feller's test for explosion, in the form of \cite[Proposition 5.32 (point (ii))]{KarShr1991}, and get that $Y$ hits $\infty=\phi(\infty)$ in finite time $\bP$-a.s.. The proof is complete.
\end{proof}

\section*{Acknowledgements}

I thank two anonymous referees for their valuable comments and suggestions, in particular for suggesting most of the results and proofs in Sections \ref{sec:inv_meas} and \ref{sec:counterex}. I thank also Franco Flandoli for useful discussions. I acknowledge support from the Hausdorff Research Institute for Mathematics in Bonn under the Junior Trimester Program `Randomness, PDEs and Nonlinear Fluctuations' and from the Italian Ministry of Education, University and Research under the project PRIN 2015233N54\_002 `Deterministic and stochastic evolution equations'.

\bibstyle{alpha.bst}
\bibliographystyle{alpha}
\bibliography{my_bib3}

\end{document}